\documentclass{amsart}

\usepackage{graphicx}
% ----------------------------------------------------------------
\vfuzz2pt % Don't report over-full v-boxes if over-edge is small
\hfuzz2pt % Don't report over-full h-boxes if over-edge is small
% THEOREMS -------------------------------------------------------

\newtheorem{thm}{Theorem}[section]
\newtheorem{cor}[thm]{Corollary}

\newtheorem{prop}[thm]{Proposition}
                                                                                                                               \theoremstyle{remark}
\newtheorem{defn}[thm]{Definition}
\theoremstyle{definition}
\newtheorem{exa}[thm]{Example}
\theoremstyle{example}

\numberwithin{equation}{section}
% MATH -----------------------------------------------------------

% ----------------------------------------------------------------
\begin{document}

\title[Induced Representation]{Induced representations of Hilbert $C^*$-modules }%
\author{Gh. Abbaspour tabadkan and S. Farhangi}%
\address{Department of pure mathematics, school of mathematics and computer science, Damghan university, Damghan, Iran.}%
\email{abbaspour@du.ac.ir,  sfarmdst@gmail.com.}%

\subjclass[2010]{46L08, 46L05}%
\keywords{Hilbert modules, Morita equivalent and Induced representations. }%
\date{\today}%
%\dedicatory{}%
%\commby{}%
% ----------------------------------------------------------------
\begin{abstract}
  In this paper, we define the notion of induced representations of a Hilbert $C^{*}$-module and we show that Morita equivalence of two Hilbert modules (in the sense of Moslehian and Joita \cite{JOI}), implies the equivalence of categories of non-degenerate representations of two Hilbert modules.
\end{abstract}
\maketitle
% ----------------------------------------------------------------
\section{Introduction}
The concept of Morita equivalence was first made by Morita \cite{MOR} in a purely algebraic content. Two unital rings are called Morita equivalent if their categories of left modules are equivalent.\\
This concept has been applied to many different categories in mathematics. And investigate the relationship between an "object", and its "representation theory".\\
In the category of $C^{*}$-algebras, Rieffel \cite{RIE1,RIE2} defined the notions of  induced representations and  (strong) Morita equivalence. The notion of induced representations of $C^*$-algebras, now called Rieffel induction, is to constructing functors between the categories of non-degenerate representations of two $C^*$-algebras. Bursztyn and  Waldmann \cite{BUR}, generalized this notion to $*$-algebras and in 2005 Joita \cite{JOT1} introduced this notion for locally $C^*$-algebras.\\ Two $C^*$-algebras $A$ and $B$ are called Morita equivalent if there exist an $A-B$-imprimitivity. This notion is weaker than isomorphism. There are many valuable papers which study properties of $C^*$-algebras are invariant under the Morita equivalence. (see for examples \cite{ BEE, RIE1, Z})\\
The notion of Morita equivalence in the category of Hilbert $C^*$-modules is defined by Skeide \cite{MSK1} and Joita, Moslehian \cite{JOI}, in two different form. In \cite{JOI}, two Hilbert $A$-module $V$ and Hilbert $B$-module $W$ are called Morita equivalent if the $C^*$-algebras $K(V)$ and $K(W)$ are Morita equivalent as $C^*$-algebras. This notion is weaker than the notion of Morita equivalence defined by Skeide, where he called $V$ and $W$ Morita equivalent when the $C^*$-algebras $K(V)$ and $K(W)$ are isomorphic as $C^*$-algebras.\\
In this paper we show that the weaker notion of Morita equivalence is enough to Hilbert modules have same categories of non-degenerate representations.\\
In section 2, we fix our terminologies and discuss preliminaries about representations of Hilbert modules and Morita equivalence of $C^*$-algebras.\\ In section 3, we introduce the notion of induced representations for Hilbert modules
 and we show that Morita equivalence Hilbert module in the sense of Joita and Moslehian \cite{JOI}, have same categories of non-degenerate representations. \\
  ----------------------------------------------------------------
\section{Preliminary}
A (right) Hilbert $C^*$-module $V$ over a $C^*$-algebra $A$ (or a Hilbert $A$-module ) is by definition
 a linear space that is a right $A$-module, together with an $A$-valued inner
product $\langle . , . \rangle$ on $V \times V$ that is $A$-linear in the second and conjugate linear in the first variable, such that $V$ is a Banach space with the norm define by  $\Vert x\Vert_{A}:= \Vert \langle x , x \rangle_{A} \Vert^{\frac{1}{2}}.$ A Hilbert  $A$-module $V$ is a full Hilbert  $A$-module if the ideal
\begin{center}
 I = $span\lbrace\langle x , y \rangle_{A}  ; x,y \in{X}  \rbrace $
\end{center}
is dense in $A$. The notion of left Hilbert $A$-module is defined in similar way.\\
We denote the $C^*$-algebras of adjointable and compact operators on Hilbert $C^*$-module $V$ by $L(V)$ and $K(V)$, respectively. See \cite{LAN} for more details on Hilbert modules.\\
Now we have a quick review on the notion of Rieffel induction. Let $X$ be a right Hilbert $B$-module  and let $\pi:B\rightarrow B(H)$ be a representation. Then $X\otimes_{alg}H$ is a Hilbert space with inner product
\begin{center}
  $\langle x \otimes h,y \otimes k \rangle$:=$\langle \pi(\langle y,x\rangle_{B})h,k \rangle$
\end{center}
for $x,y \in X$ and $h,k \in H$ [ \cite{RAE}, Proposition 2.64 ].\\
If $A$ acts as adjointable operators on a Hilbert $B$-module $X$, and $\pi$ is a non-degenerate representation of $B$ on $H$. Then $Ind\pi$ defined by $$Ind\pi(a)(x\otimes_{B}h):=(ax)\otimes_{B}h$$ is a representation of $A$ on $X\otimes_{B}H$. If $X$ is non-degenerate as an $A$-module, then $Ind\pi$ is a non-degenerate representation of $A$ [ \cite{RAE}, Proposition 2.66 ].\\ This is a functor from the non-degenerate representations of $B$ to the non-degenerate representations of $A$. Now if we want to get back from representations of $A$ to representations of $B$, we need also an $A$-valued inner product on $X$. This lead us to the following definition.
\begin{defn}
An $A-B$\emph{-imprimitivity bimodule} is an $A-B$-bimodule such that:\\
$(a)$  $X$ is a full left Hilbert $A$-module, and is a full right Hilbert $B$-module;\\
$(b)$ for all $ x,y \in X$, $a \in A$, $b \in B$
\begin{center}
 $\langle ax ,  y \rangle_{B}$=$ \langle x , a^{*}y \rangle_{B} $  and  $_{A}\langle xb ,  y \rangle$=$ _{A}\langle x , yb^{*} \rangle $
\end{center}
$(c)$ for all $ x,y,z \in X$
\begin{center}
 $_{A}\langle x ,  y \rangle z $=$ x \langle y,z \rangle_{B} $.
\end{center}
\end{defn}
If $X$ is an $A-B$-imprimitivity bimodule, let $\widetilde{X}$ be the conjugate vector space, so that there is by definition an additive bijection $b:X\rightarrow \widetilde{X}$ such that $b(\lambda x):= \overline{\lambda}b(x)$. Then $\widetilde{X}$ is a $B-A$-imprimitivity bimodule with\\
$\begin{array}{cc}
  bb(x):=b(xb^{*}) & b(x)a:=b(a^{*}x) \\
  _{B}\langle b(x),b(y)\rangle:=\langle x,y\rangle_{B} & \langle b(x),b(y)\rangle _{A}:=_{A} \langle x,y\rangle
\end{array}$\\
for $x,y \in X$, $a\in A$ and $b \in B$. $\widetilde{X}$ called the \emph{dual module} of $X$.

\begin{exa}
A Hilbert space $H$ is a $K(H)-\textbf{C}$-impirimitivity bimodule with $_{K(H)}\langle h,k \rangle $:=$h \otimes \overline{k}$, where $h \otimes \overline{k}$ denote the rank one operator $g \mapsto \langle g,k\rangle h$.
\end{exa}

\begin{exa}
Every  $C^*$-algebra  $A$ is an $A-A$-imprimitivity bimodule for the bimodule structure given by the multiplication in  $A$, with the inner product  $ _{A}\langle a , b \rangle$=$ab^{*}$ and  $ \langle a , b \rangle_{A}$=$a^{*}b$ .
\end{exa}
Two  $C^*$-algebras  $A$ and $B$ are Morita equivalent if there is an $A-B$-imprimitivity bimodule $X$; we shall say that $X$ implements the Morita equivalence of  $A$ and $B$ .\\
Morita equivalence is weaker than isomorphism. If $\varphi$ is an isomorphism of  $A$ onto $B$, we can construct an imprimitivity bimodule $_{A}X_{B}$ with underlying space $B$ by
\begin{center}
$xb:=xb$, $ax:= \varphi(a)x$, $\langle x,y \rangle_{B}:=x^{*}y$ and $_{A}\langle x,y \rangle:=\varphi^{-1}(xy^{*})$.
\end{center}
Morita equivalence is an equivalence relation on $C^*$-algebras. If $A$ and $B$ are Morita equivalent then the functor mentioned above which comes from tensoring by $X$ has an inverse functor. In fact its inverse is functor comes from tensoring by $\widetilde{X}$, the dual module of $X$ [ \cite{RAE}, Proposition 3.29 ]. So $A$ and $B$ have the same categories of non-degenerate representations.\\
In this paper we will prove that two full Hilbert modules on Morita equivalent $C^*$-algebras have the same categories of  non-degenerate representations. But let us first say some facts about representations of Hilbert modules.\\
Let $V$ and $W$ be Hilbert $C^{*}$-modules over $C^{*}$-algebras $A$ and $B$, respectively, and $ \varphi:A\longrightarrow B$ a morphism of $C^*$-algebras.\\
A map $\Phi :V \longrightarrow W$ is said to be a $\varphi$-morphism of Hilbert $C^*$-modules if\\ $ \langle \Phi(x), \Phi (y) \rangle=\varphi(\langle x,y \rangle)$ is satisfied for all $x,y \in V$.\\
A $\varphi$-morphism $ \Phi :V \longrightarrow B(H,K)$, where $  \varphi :A \longrightarrow B(H) $ is a representation of $A$ is called a representation of $V$. We will say that a representation  $ \Phi :V \longrightarrow B(H,K)$ is a faithful representation of $V$ if $\Phi$ is injective.\\
Throughout the paper, when we say that $\Phi$ is a representation of $V$, we will assume that an associated representation of $A$ is denoted by the same small case letter $ \varphi$.\\
Let  $ \Phi :V \longrightarrow B(H,K)$ be a representation of a Hilbert $A$-module $V$. $\Phi$ is said to be\emph{ non-degenerate} if $\overline{\Phi (V)H}$=$K$ and $\overline{\Phi (V)^{*}K}$=$H$. (Or equivalently, if $\xi_{1} \in H$,$\xi_{2} \in K$ are such that $\Phi (V)\xi_{1}=0$ and $\Phi(V)^{*}\xi_{2}=0$, then $\xi_{1}=0$ and $\xi_{2}=0$ ). If $\Phi$ is non-degenerate, then $\varphi$ is non-degenerate [ \cite{ARA}, Lemma 3.4 ].\\
Let  $ \Phi :V \longrightarrow B(H,K)$ be a representation of a Hilbert $A$-module $V$ and $ K_{1} \prec  H $ , $ K_{2} \prec K $ be closed subspaces.
A pair of subspaces $(K_{1},K_{2})$ is said to be $\Phi$\emph{-invariant} if
\begin{center}
 $\Phi (V)K_{1}\subseteq K_{2}$ and $\Phi (V)^{*}K_{2}\subseteq K_{1}$.
\end{center}
 $ \Phi $ is said to be \emph{irreducible} of $(0,0)$ and $(H,K)$ are the only $\Phi$-invariant pairs.\\
%And if $\phi \neq 0$ is irreducible ,then $\phi \neq 0$ is irreducible.\\
Two representations $\Phi_{i}: V \rightarrow B(H_{i};K_{i})$ of V, $i =1,2$ are said to be (unitarily) equivalent, if there are unitary operators $U_{1}:H_{1}\rightarrow H_{2}$ and $U_{2}:K_{1}\rightarrow K_{2}$; such that  $U_{2}\Phi_{1}(v)=\Phi_{2}(v)U_{1}$
for all $v \in V$. For more details on representations of Hilbert modules see \cite{ARA}.\\
Finally we need the interior tensor product of Hilbert modules, we mention it here briefly. For more details one can refer to the Lance book \cite{LAN}. 
Suppose that $V$ and $W$ are Hilbert $A$-module and  Hilbert $B$-module, respectively, and $\rho :A \longrightarrow L(W)$ is a *-homomorphism, we can regard $W$ as a left $A$-module, the action being given by $(a,y) \longmapsto \rho(a)y$ for all $a \in A$, $y \in W$.\\
We can form the algebraic tensor product of $V$ and $W$ over $A$, $V \otimes_{alg} W$, which is a right $B$-module. The action of $B$ being given by $(x \otimes y)b$ := $x \otimes yb$ for $b \in B$.\\
In fact it is the quotient space of the vector space tensor product $V \otimes_{alg} W$ by the subspace generated by elements of the form
\begin{center}
$xa \otimes y-x \otimes \rho(a)y$, $(x \in V , y \in W , a \in A)$.
\end{center}
$V\otimes_{alg}W$ is an inner product $B$-module under the inner product
\begin{center}
  $\langle x_{1} \otimes y_{1},x_{2} \otimes y_{2} \rangle$=$\langle y_{1},\rho( \langle x_{1},x_{2} \rangle)y_{2}\rangle$
\end{center}
for $x_{1},x_{2} \in V$, $y_{1},y_{2} \in W$.\\
And $V\otimes_{A}W$, which is called the interior tensor product of $V$ and $W$, obtained by completing $V\otimes_{alg}W$ with respect to this inner product.\\

% ----------------------------------------------------------------
\section{Induced Representation}
In this section we discussed about Morita equivalence of Hilbert $C^{*}$-modules and speak about the notion of induced representation of a Hilbert $C^{*}$-module and then we prove the imprimitivity theorem for induced representations of Hilbert $C^{*}$-modules.
\begin{prop}\label{prop:ind}
Let $V$ and $W$ be two full Hilbert $C^{*}$-modules  over $C^{*}$-algebras $A$ and $B$, respectively. Let $X$ be a $B$-module and $A$ acts as adjointable operators on Hilbert $C^{*}$-module $X$, and $\Phi : W \rightarrow  B(H,K)$ is a non-degenerate representation. Then the formula,
\begin{center}
$ Ind_{X}\Phi(v)(x\otimes g) := v\otimes x \otimes h $
\end{center}
extends to give a representation of $V$ as bounded operator of Hilbert space $X\otimes_{B} H$ to Hilbert space $V\otimes_{A} X\otimes_{B} H$. If $X$ is non-degenerate as an $A$-module, then $Ind_{X}\Phi$ is a non-degenerate representation.
\end{prop}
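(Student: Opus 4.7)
The plan is to interpret the construction as follows. Writing $\varphi\colon B\to B(H)$ for the associated (non-degenerate, by \cite{ARA}, Lemma~3.4) representation of $B$, the hypothesis that $A$ acts by adjointable operators on the Hilbert $B$-module $X$ lets us form the classical Rieffel-induced representation $\mathrm{Ind}\,\varphi\colon A\to B(X\otimes_{B}H)$, defined on elementary tensors by $\mathrm{Ind}\,\varphi(a)(x\otimes h)=(ax)\otimes h$ (cf.\ \cite{RAE}, Proposition~2.66). This gives $X\otimes_{B}H$ a left $A$-module structure, which is exactly what one needs in order to form the second interior tensor product $V\otimes_{A}(X\otimes_{B}H)$. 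For each $v\in V$ I would set $T_{v}(x\otimes h):=v\otimes x\otimes h$ and claim that $v\mapsto T_{v}$ is the induced representation $\mathrm{Ind}_{X}\Phi$, whose associated representation of $A$ is $\mathrm{Ind}\,\varphi$.

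Well-definedness on the algebraic tensor product is routine: the assignment $(x,h)\mapsto v\otimes x\otimes h$ is $B$-balanced because the $A$-balancing in the outer tensor absorbs the $B$-balancing of the inner one via $\mathrm{Ind}\,\varphi$. Boundedness and the $(\mathrm{Ind}\,\varphi)$-morphism condition are then handled simultaneously by the defining formula for the interior tensor product inner product in $V\otimes_{A}(X\otimes_{B}H)$:
\[
\langle T_{v_{1}}\xi_{1},\,T_{v_{2}}\xi_{2}\rangle=\langle\xi_{1},\,\mathrm{Ind}\,\varphi(\langle v_{1},v_{2}\rangle_{A})\,\xi_{2}\rangle \qquad (\xi_{i}\in X\otimes_{B}H,\ v_{i}\in V).
\]
This identity says $T_{v_{1}}^{*}T_{v_{2}}=\mathrm{Ind}\,\varphi(\langle v_{1},v_{2}\rangle_{A})$, which on the diagonal yields $\|T_{v}\|^{2}\le\|v\|^{2}$, so that $T_{v}$ extends by continuity to the completion; and in general is precisely the morphism identity that qualifies $v\mapsto T_{v}$ as a representation of the Hilbert $A$-module $V$ in the sense recalled in Section~2.

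For non-degeneracy under the additional hypothesis that $X$ is non-degenerate as an $A$-module, the condition $\overline{\mathrm{Ind}_{X}\Phi(V)(X\otimes_{B}H)}=V\otimes_{A}(X\otimes_{B}H)$ is automatic, since elementary tensors $v\otimes x\otimes h$ already span the target densely. For the second condition, a short calculation on elementary tensors gives
\[
\mathrm{Ind}_{X}\Phi(v)^{*}(v'\otimes x\otimes h)=(\langle v,v'\rangle_{A}\,x)\otimes h;
\]
fullness of $V$ makes the closed linear span of the inner products $\langle v,v'\rangle_{A}$ equal to $A$, and non-degeneracy of $X$ as an $A$-module makes $\overline{AX}=X$, so these images together span a dense subspace of $X\otimes_{B}H$. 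The one point requiring real care is the iterated interior tensor product: one must keep straight that the left $A$-action on $X\otimes_{B}H$ appearing in $V\otimes_{A}(X\otimes_{B}H)$ is precisely $\mathrm{Ind}\,\varphi$, so that the inner-product formula above is literally the definition of the interior tensor product inner product. Once this identification is in place every step is bookkeeping.
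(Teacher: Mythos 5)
Your proposal is correct, and its overall skeleton is the same as the paper's: induce the base representation to $\mathrm{Ind}\,\varphi\colon A\to B(X\otimes_{B}H)$ by classical Rieffel induction, define the candidate operator on elementary tensors, verify boundedness and the morphism identity, and then prove non-degeneracy exactly as you do (density of $v\otimes x\otimes h$ in the range space for the first condition; fullness of $V$ together with $\overline{AX}=X$ and the formula $\mathrm{Ind}_{X}\Phi(v)^{*}(v'\otimes x\otimes h)=(\langle v,v'\rangle_{A}x)\otimes h$ for the second). The one genuine difference is how boundedness is obtained. The paper follows the Raeburn--Williams template from the $C^{*}$-algebra case: it reduces to the case where $\varphi$ is cyclic with cyclic vector $h$ and runs an explicit computation on vectors $\sum_{i}x_{i}\otimes\varphi(b_{i})h$, factoring ${}_{A}\langle v,v\rangle$ through its positive square root to extract $\|\eta_{v}\|\le\|v\|$. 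You instead observe that once the codomain is recognized as the interior tensor product $V\otimes_{A}(X\otimes_{B}H)$ formed over the left action $\mathrm{Ind}\,\varphi$, the identity $T_{v_{1}}^{*}T_{v_{2}}=\mathrm{Ind}\,\varphi(\langle v_{1},v_{2}\rangle_{A})$ is literally the definition of the inner product there, so boundedness and the morphism condition drop out simultaneously. This is shorter and cleaner; its only cost is that it leans on the standard facts that the interior tensor product semi-inner product is positive and that $(V\otimes_{A}X)\otimes_{B}H$ is canonically isomorphic to $V\otimes_{A}(X\otimes_{B}H)$ --- both available from Lance's book, and both implicitly used by the paper as well, which invokes the very same inner-product identity a few lines later when verifying that $\mathrm{Ind}_{X}\Phi$ is an $\mathrm{Ind}_{X}\varphi$-morphism. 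The difference is thus one of economy rather than substance.
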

\begin{proof}
 Since $A$ acts as adjointable operators on the Hilbert $B$-module $X$, so we may construct interior tensor product $V\otimes_{A} X$, which is a $B$-module. Then  $V\otimes_{A} X\otimes_{B} H$ and $X\otimes_{B} H $ are  Hilbert spaces.\\
Let $\Phi :W\rightarrow B(H,K)$ be a non-degenerate representation, so there is a representation $\varphi :B\rightarrow B(H)$ such that $\langle \Phi(x),\Phi(y) \rangle$=$\varphi (\langle x,y \rangle_{B}) $ for all $x,y \in W$. $\varphi$ is non-degenerate so by Rieffel induction we get a non-degenerate representation,
\begin{center}
$Ind_{X}\varphi:A\rightarrow B(X\otimes_{B} H)$.
\end{center}
Now we want to construct a representation, $Ind_{X}\Phi$ of $V$.\\
The mapping $(x,h)\mapsto v\otimes x \otimes h$ is bilinear, thus there is a  linear transformation $\eta_{v}: X\otimes_{alg}H \rightarrow V\otimes_{A} X\otimes_{B} H$ such that $\eta _{v}(x\otimes h)$=$v\otimes x \otimes h$.\\
To see that $\eta_{v}$ is bounded, as in the $C^*$-algebraic case, we may suppose that $\varphi$ is cyclic, with cyclic vector $h$. Then for any $x_{i} \in X , b_{i} \in B$ we have
 \begin{align*}
  \|\eta_{v}(\sum_{i=1}^{n}x_{i} \otimes \varphi(b_{i})h) \|^{2} &= \sum_{i} \sum_{j}\langle v\otimes x_{i} \otimes \varphi(b_{i})h,v\otimes x_{j} \otimes \varphi(b_{j})h \rangle \\
  & = \sum_{i} \sum_{j}\langle  x_{i} \otimes \varphi(b_{i})h,Ind_{X}\varphi (_{A}\langle v,v \rangle) x_{j} \otimes \varphi(b_{j})h \rangle \\
  & = \sum_{i} \sum_{j}\langle  x_{i} \otimes \varphi(b_{i})h,_{A}\langle v,v \rangle x_{j} \otimes \varphi(b_{j})h \rangle \\
  & = \sum_{i} \sum_{j}\langle \varphi(b_{i})h,\varphi (\langle _{A}\langle v,v \rangle x_{i}, x_{j}\rangle_{B}) \varphi(b_{j})h \rangle \\
  & = \sum_{i} \sum_{j}\langle h,\varphi(b_{i}^{*})\varphi (\langle _{A}\langle v,v \rangle x_{i}, x_{j}\rangle_{B})\varphi(b_{j})h \rangle \\
  & = \sum_{i} \sum_{j}\langle h,\varphi (\langle _{A}\langle v,v \rangle x_{i}b_{i}, x_{j}b_{j}\rangle_{B})h \rangle \\   & = \sum_{i} \sum_{j}\langle h,\varphi(\langle _{A}\langle v,v \rangle^{\frac{1}{2}} x_{i}b_{i},_{A}\langle v,v \rangle^{\frac{1}{2}} x_{j}b_{j}\rangle_{B})h \rangle \\
  & = \langle h,\varphi(\langle _{A}\langle v,v \rangle^\frac{1}{2}\sum_{i} x_{i}b_{i},_{A}\langle v,v \rangle^\frac{1}{2} \sum_{j} x_{j}b_{j}\rangle_{B})h \rangle \\
  & \leq \|_{A}\langle v,v \rangle^\frac{1}{2}\|^{2}\langle h,\varphi(\langle \sum_{i} x_{i}b_{i},  \sum_{j}x_{j}b_{j}\rangle_{B})h \rangle \\
  & = \|v\|_{A}^{2}\langle \sum_{i} x_{i}b_{i}\otimes h,\sum_{j} x_{j}b_{j}\otimes h \rangle \\
  & = \|v\|_{A}^{2}\| \sum_{i} x_{i} \otimes \varphi(b_{i})h\|^{2} .
\end{align*}
So $\eta_{v}$ is bounded and $\|\eta_{v}\|^{2}\leq \|v\|_{A}^{2}$.\\
Hence $\eta_{v}$ extends to an operator $Ind_{X}\Phi(v)$ on $X\otimes_{B} H$ and we have\\
\begin{align*}
\langle x \otimes h,Ind_{X}\Phi^{*}(v) Ind_{X}\Phi(v^{'}) x^{'}\otimes h^{'} \rangle & = \langle Ind_{X}\Phi(v)(x\otimes h),Ind_{X}\Phi(v^{'})(x^{'}\otimes h^{'}) \rangle \\
& = \langle v\otimes x\otimes h,v^{'}\otimes x^{'}\otimes h^{'} \rangle \\
& = \langle x\otimes h, Ind_{X}\varphi(_{A}\langle v,v^{'}\rangle)x^{'}\otimes h^{'} \rangle .
\end{align*}
Thus  $$\langle Ind_{X}\Phi(v), Ind_{X}\Phi(v^{'}) \rangle=Ind_{X}\Phi^{*}(v) Ind_{X}\Phi(v^{'})=Ind_{X}\varphi(_{A}\langle v,v^{'}\rangle).$$So $Ind_{X}\Phi:V \rightarrow B(X\otimes_{B} H,V\otimes_{A} X\otimes_{B} H)$ is an $Ind_{X}\varphi$-morphism and hence a representation of $V$.\\
Now we show that $Ind_{X}\Phi$ is non-degenerate. For this we must to show that\\ $\overline{Ind_{X}\Phi(V)X\otimes_{B} H}$=$V\otimes_{A} X\otimes_{B} H$ and $\overline{Ind_{X}\Phi(V)^{*}(V\otimes X\otimes h)}$=$X\otimes H$.\\
By definition of $Ind_{X}\Phi$ it is easy to see that $\overline{Ind_{X}\Phi(V)X\otimes_{B} H}$=$V\otimes_{A} X\otimes_{B} H$.\\
By hypotheses,  $A$ acts as adjointable operators on Hilbert $C^{*}$-module $X$ and this action is non-degenerate, that is, $\overline{AX}$=$X$ and $V$ is full, $\overline{\langle V, V\rangle}$=$A$, so $\overline{\langle V,V\rangle X}$=$X$.\\
For all $x\otimes h\in X\otimes_{alg}H$ we have:
$$\|x\otimes h\|^{2}=|\langle h,(\varphi(\langle x,x\rangle_{B}))h\rangle |\leq \|\langle x,x\rangle_{B}\|\|h\|^{2}=\|x\|_{B}^{2}\|h\|^{2},$$
so if $\sum _{A}\langle v_{i},v^{'}_{i}\rangle x_{i}$ approximates $x$, then $\sum _{A}\langle v_{i},v^{'}_{i}\rangle x_{i}\otimes h$ approximates $x\otimes h$.\\
But, $\sum _{A}\langle v_{i},v^{'}_{i}\rangle x_{i}\otimes h$=$\sum Ind_{X}\varphi(_{A}\langle v_{i},v^{'}_{i}\rangle)x_{i}\otimes h$=$\sum Ind_{X}\Phi(v_{i})^{*}Ind_{X}\Phi(v^{'}_{i})(x_{i}\otimes h)$.\\
So every elementary tensor $x\otimes h$ in $X\otimes_{alg}H$ can be approximated by a sum of the form $\sum Ind_{X}\Phi(v_{i})^{*}Ind_{X}\Phi(v^{'}_{i})(x_{i})\otimes h$=$\sum Ind_{X}\Phi(v_{i})^{*}(v^{'}_{i}\otimes x_{i}\otimes h)$.\\
Thus $Ind_{X}\Phi$ is a non-degenerate representation.
\end{proof}
\begin{defn}
We call the representation $ Ind_{X}\Phi$ constructed  above, the Rieffel-induced representation from $W$ to $V$ via $X$.   
\end{defn}
\begin{prop}
Let $\Phi_{1} :W\rightarrow B(H_{1},K_{1})$ and $\Phi_{2} :W\rightarrow B(H_{2},K_{2})$ be two non-degenerate
representations. If $\Phi_{1}$ and $\Phi_{2}$ are unitarily equivalent, then $Ind_{X}\Phi_{1}$ and $Ind_{X}\Phi_{2}$  are unitarily equivalent.
\end{prop}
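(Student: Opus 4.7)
The plan is to produce the required unitaries by tensoring the intertwiner $U_1$ with the identity maps on $X$ and on $V$, and then to verify that these give well-defined unitary intertwiners between the induced representations. Explicitly, I will define $U \colon X\otimes_{B} H_{1} \to X\otimes_{B} H_{2}$ by $U(x\otimes h):=x\otimes U_{1}h$ and $U' \colon V\otimes_{A} X\otimes_{B} H_{1} \to V\otimes_{A} X\otimes_{B} H_{2}$ by $U'(v\otimes x\otimes h):=v\otimes x\otimes U_{1}h$, and check that the pair $(U,U')$ implements the unitary equivalence.

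The first (and only nontrivial) step is to promote the intertwining relation $U_{2}\Phi_{1}(w)=\Phi_{2}(w)U_{1}$ at the level of Hilbert module representations to an intertwining relation $U_{1}\varphi_{1}(b)=\varphi_{2}(b)U_{1}$ at the level of $B$. Using $\langle \Phi_{i}(w),\Phi_{i}(w')\rangle=\varphi_{i}(\langle w,w'\rangle_{B})$, I compute
\begin{align*}
U_{1}\varphi_{1}(\langle w,w'\rangle_{B})
&= U_{1}\Phi_{1}(w)^{*}\Phi_{1}(w')
= \Phi_{2}(w)^{*}U_{2}\Phi_{1}(w') \\
&= \Phi_{2}(w)^{*}\Phi_{2}(w')U_{1}
= \varphi_{2}(\langle w,w'\rangle_{B})U_{1},
\end{align*}
so the intertwining holds on $\mathrm{span}\,\langle W,W\rangle_{B}$. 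Because $W$ is full (as in the standing assumption of Proposition~\ref{prop:ind}), this span is dense in $B$, and continuity extends the intertwining to all $b\in B$. This is the step I expect to be the main subtle point; once it is in hand, the rest is essentially a routine bookkeeping.

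With the intertwining secured, well-definedness of $U$ on the balanced tensor product is immediate: the relation $xb\otimes h=x\otimes\varphi_{1}(b)h$ in $X\otimes_{B}H_{1}$ is sent to $xb\otimes U_{1}h=x\otimes\varphi_{2}(b)U_{1}h$ in $X\otimes_{B}H_{2}$, using $U_{1}\varphi_{1}(b)=\varphi_{2}(b)U_{1}$. To verify isometry I use the interior tensor product inner product:
\begin{align*}
\langle U(x\otimes h),U(x'\otimes h')\rangle
&= \langle U_{1}h,\varphi_{2}(\langle x,x'\rangle_{B})U_{1}h'\rangle \\
&= \langle U_{1}h,U_{1}\varphi_{1}(\langle x,x'\rangle_{B})h'\rangle
= \langle x\otimes h,x'\otimes h'\rangle,
\end{align*}
so $U$ extends to an isometry on the completion; surjectivity on elementary tensors follows from surjectivity of $U_{1}$, and then density gives unitarity. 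The identical calculation, now with the $A$-valued inner product sitting in front, shows that $U'$ is unitary from $V\otimes_{A} X\otimes_{B} H_{1}$ onto $V\otimes_{A} X\otimes_{B} H_{2}$.

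Finally I verify the intertwining for the induced representations. For $v\in V$ and an elementary tensor $x\otimes h\in X\otimes_{B}H_{1}$,
\begin{align*}
U'\bigl(\mathrm{Ind}_{X}\Phi_{1}(v)(x\otimes h)\bigr)
&= U'(v\otimes x\otimes h)
= v\otimes x\otimes U_{1}h \\
&= \mathrm{Ind}_{X}\Phi_{2}(v)(x\otimes U_{1}h)
= \mathrm{Ind}_{X}\Phi_{2}(v)\bigl(U(x\otimes h)\bigr),
\end{align*}
and both sides are bounded, so the identity extends by continuity to all of $X\otimes_{B}H_{1}$. Hence $(U,U')$ witnesses the unitary equivalence of $\mathrm{Ind}_{X}\Phi_{1}$ and $\mathrm{Ind}_{X}\Phi_{2}$.
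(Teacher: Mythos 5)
Your proof is correct and takes essentially the same route as the paper: both construct the unitaries $\mathrm{id}_X\otimes U_1$ and $\mathrm{id}_V\otimes\mathrm{id}_X\otimes U_1$ and check the intertwining on elementary tensors. You additionally supply the one genuinely nontrivial verification the paper leaves implicit, namely that $U_1\varphi_1(b)=\varphi_2(b)U_1$ follows from $U_2\Phi_1(w)=\Phi_2(w)U_1$ via fullness of $W$, which is what makes the tensored maps well defined and isometric.
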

\begin{proof} Suppose $U_{1}:H_{1}\rightarrow H_{2}$ and $U_{2}:K_{1}\rightarrow K_{2}$ be unitary operators
such that $U_{2}\Phi_{1}(w)=\Phi_{2}(w)U_{1}$. Then $id_{X}\otimes U_{1}:X\otimes_{alg} H_{1}\rightarrow X\otimes_{alg} H_{2}$ given by $x\otimes h\mapsto x\otimes U_{1}(h)$ and $id_{V}\otimes id_{X}\otimes U_{2}:V\otimes_{A} X\otimes_{alg} H_{1}\rightarrow V\otimes_{A} X\otimes_{alg} H_{2}$ given by $v\otimes x\otimes h\mapsto v\otimes x\otimes U_{1}(h)$ may be extended  to unitary operators $V_{1}$ from $X\otimes_{B} H_{1} $ onto $X\otimes_{B} H_{2} $ and $V_{2}$ from $V\otimes_{A}X\otimes_{B} H_{1}$ onto $V\otimes_{A}X\otimes_{B} H_{1}$ and moreover, $V_{2}Ind_{X}\Phi_{1}(v)=Ind_{X}\Phi_{2}(v)V_{1}$. So $Ind_{X}\Phi_{1}$ and $Ind_{X}\Phi_{2}$ are unitarily equivalent.
\end{proof}

\begin{cor}
Suppose $\Phi:W \rightarrow B(H,K)$ and $\oplus_{s}\Phi_{s}:W \rightarrow B(\oplus_{s}H_{s},K)$ are unitary equivalent, then $Ind_{X}\Phi:V\rightarrow B(X\otimes_{B}H,V\otimes_{A}X\otimes_{B}H)$ is unitary equivalent to $\oplus_{s}Ind_{X}\Phi_{s}:V\rightarrow B(X\otimes_{B}\oplus_{s}H_{s},V\otimes_{A}X\otimes_{B}\oplus_{s}H_{s})$.\\
\end{cor}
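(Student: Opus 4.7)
The plan is to reduce to showing that Rieffel induction commutes with direct sums up to a natural unitary identification, and then to invoke the preceding proposition.

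First, by the preceding proposition, the hypothesis that $\Phi$ is unitarily equivalent to $\oplus_s \Phi_s$ gives that $Ind_X\Phi$ is unitarily equivalent to $Ind_X(\oplus_s \Phi_s)$. So it suffices to exhibit a unitary equivalence between $Ind_X(\oplus_s \Phi_s)$ and $\oplus_s Ind_X\Phi_s$.

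Second, I would define two candidate unitaries at the level of algebraic tensor products. Let $U_1 : X\otimes_B(\oplus_s H_s)\to \oplus_s(X\otimes_B H_s)$ be determined on elementary tensors by $x\otimes(h_s)_s\mapsto (x\otimes h_s)_s$, and let $U_2 : V\otimes_A X\otimes_B(\oplus_s H_s)\to \oplus_s(V\otimes_A X\otimes_B H_s)$ be determined by $v\otimes x\otimes(h_s)_s\mapsto (v\otimes x\otimes h_s)_s$. The associated representation of $B$ attached to $\oplus_s \Phi_s$ is $\oplus_s\varphi_s$, so the interior tensor product inner product is computed through $\oplus_s\varphi_s$, which acts diagonally. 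A direct computation on finite sums shows that $U_1$ preserves inner products and that its image is dense in $\oplus_s(X\otimes_B H_s)$; hence $U_1$ extends to a unitary. The same reasoning, now applied at the outer level using the Rieffel-induction identity $Ind_X(\oplus_s\varphi_s)=\oplus_s Ind_X\varphi_s$ for the underlying $C^*$-algebras, yields that $U_2$ is a unitary.

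Third, to check intertwining, compute on an elementary tensor: $(\oplus_s Ind_X\Phi_s)(v)\,U_1(x\otimes(h_s)_s)=(v\otimes x\otimes h_s)_s=U_2(v\otimes x\otimes(h_s)_s)=U_2\,Ind_X(\oplus_s\Phi_s)(v)(x\otimes(h_s)_s)$. By linearity and continuity this identity extends to all of $X\otimes_B(\oplus_s H_s)$, giving $(\oplus_s Ind_X\Phi_s)(v)\,U_1=U_2\,Ind_X(\oplus_s\Phi_s)(v)$ for every $v\in V$, which is the required unitary equivalence. The main obstacle is the bookkeeping needed to verify that $U_1$ and $U_2$ are well-defined and isometric: these tensor products are formed by first quotienting by the balancing relations $xb\otimes h-x\otimes\varphi(b)h$ and then completing in a $\varphi$-dependent inner product, so one must check that $U_1$ respects these relations, which follows because $\oplus_s\varphi_s$ acts diagonally, and that isometry reduces, after expanding the inner product formula from the preliminaries, to the orthogonality of the summands $H_s$ in $\oplus_s H_s$. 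The remaining verifications are routine.
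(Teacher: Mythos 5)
Your proof is correct and follows exactly the route the paper intends: the paper states this as an immediate consequence of the preceding proposition and gives no written proof, leaving implicit both the application of that proposition and the identification $Ind_{X}(\oplus_{s}\Phi_{s})\cong\oplus_{s}Ind_{X}\Phi_{s}$ coming from the natural unitaries $X\otimes_{B}(\oplus_{s}H_{s})\cong\oplus_{s}(X\otimes_{B}H_{s})$ and $V\otimes_{A}X\otimes_{B}(\oplus_{s}H_{s})\cong\oplus_{s}(V\otimes_{A}X\otimes_{B}H_{s})$. Your write-up supplies precisely these missing verifications, so nothing further is needed.
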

\begin{defn}[ \cite{JOI}, Definition 2.1 ]
Two Hilbert $C^*$-modules $V$ and $W$ , respectively, over $C^*$-algebras
$A$ and $B$ are called Morita equivalent, if the $C^*$-algebras $K(V)$ and $K(W)$ are Morita equivalent as $C^*$-algebras. 
\end{defn}
It is well known that for Hilbert $C^*$-module $V$, $K(V)$ is Morita equivalent to $\overline{\langle V, V\rangle}$, so if $V$ and $W$ are full, then they are Morita equivalent if and only if their underlying $C^*$-algebras are Morita equivalent [ \cite{JOI}, Proposition 2.8 ].\\
The following theorem show that there is a bijection  between non-degenerate representations of two Morita equivalent full Hilbert $C^*$-modules. The fullness property is not crucial, if necessary we can replace underlying $C^*$-algebra by a suitable ones,  thus two Morita equivalent Hilbert modules in the above sense have same categories of non-degenerate representations. 
\begin{thm} \label{thm:cor}
Suppose that $X$ is an $A-B$-imprimitivity bimodule, and $\Phi$ and $\Psi$ are non-degenerate representation of $W$ and $V$, respectively. Then $Ind_{\widetilde{X}}(Ind_{X}\Phi)$ is naturally unitary equivalent to $\Phi$, and $Ind_{X}(Ind_{\widetilde{X}}\Psi)$ is naturally unitary equivalent to $\Psi$.
\end{thm}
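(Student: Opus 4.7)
The plan is to boost Rieffel's classical equivalence at the $C^*$-algebra level (Raeburn--Williams, Proposition~3.29) to the Hilbert module level. The representation of $B$ associated with $Ind_{\widetilde{X}}(Ind_X\Phi)$ is by construction $Ind_{\widetilde{X}}(Ind_X\varphi)$, so Rieffel's theorem already supplies a natural unitary
\[
U_1:\widetilde{X}\otimes_A X\otimes_B H\To H,\qquad b(x)\otimes y\otimes h\longmapsto\varphi(\langle x,y\rangle_B)h,
\]
intertwining $Ind_{\widetilde{X}}(Ind_X\varphi)$ with $\varphi$. What remains is to build the companion unitary on the ``target'' spaces and verify the Hilbert module intertwining.

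Non-degeneracy of $\Phi$ produces a unitary $\Theta:W\otimes_B H\to K$, $w\otimes h\mapsto\Phi(w)h$: isometry follows from $\Phi(w)^*\Phi(w')=\varphi(\langle w,w'\rangle_B)$, and surjectivity from $\overline{\Phi(W)H}=K$. Because $U_1$ intertwines the left $B$-actions (a direct check using $b\cdot b(x)=b(xb^*)$), the map $id_W\otimes U_1$ extends to a unitary of $B$-balanced tensor products, and I set
\[
U_2:=\Theta\circ(id_W\otimes U_1):W\otimes_B\widetilde{X}\otimes_A X\otimes_B H\To K,
\]
so that $U_2(w\otimes b(x)\otimes y\otimes h)=\Phi(w)\,\varphi(\langle x,y\rangle_B)\,h$. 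The intertwining identity $U_2\circ Ind_{\widetilde X}(Ind_X\Phi)(w_0)=\Phi(w_0)\circ U_1$ is then immediate on elementary tensors (both sides yield $\Phi(w_0)\varphi(\langle x,y\rangle_B)h$), and naturality in $\Phi$ is clear from the formulas, since $U_1$ depends only on $\varphi$ and $U_2$ depends on $\Phi$ only through the manifestly functorial $\Theta$.

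The second equivalence $Ind_X(Ind_{\widetilde X}\Psi)\simeq\Psi$ follows by the symmetric argument, swapping the roles of $X$ and $\widetilde X$ (and of $A,B,V,W$) and invoking $\widetilde{\widetilde X}\cong X$. The only delicate technical point --- and where the proof becomes heaviest --- is verifying that $U_1$, $\Theta$ and $U_2$ are well-defined on the various balanced tensor products, that is, that the defining formulas respect each of the relations $b(x)\cdot a\otimes y=b(x)\otimes a\cdot y$, $y\otimes\varphi(b)h=yb\otimes h$, and $w\otimes\eta\cdot b=wb\otimes\eta$, and extend continuously to the Hilbert space completions, in exactly the style of Proposition~\ref{prop:ind}.
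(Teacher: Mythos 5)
Your proposal is correct and follows essentially the same route as the paper: the same unitary $U_1(b(x)\otimes y\otimes h)=\varphi(\langle x,y\rangle_B)h$ from Raeburn--Williams Proposition~3.29, the same $U_2$ with $U_2(w\otimes b(x)\otimes y\otimes h)=\Phi(w)\varphi(\langle x,y\rangle_B)h$, the same intertwining check on elementary tensors, and the same appeal to symmetry (replacing $X$ by $\widetilde{X}$) for the second equivalence. Your factorization $U_2=\Theta\circ(id_W\otimes U_1)$, with $\Theta(w\otimes h)=\Phi(w)h$ unitary by non-degeneracy of $\Phi$, is in fact a welcome refinement, since the paper merely asserts that $U_2$ is unitary without justification.
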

\begin{proof} If $\Phi:W\rightarrow B(H,K)$ is a non-degenerate representation by proposition \ref{prop:ind}, $Ind_{X}\Phi:V\rightarrow B(X\otimes_{B}H,V\otimes_{A}X\otimes_{B}H)$ is a non-degenerate representation of $V$.\\ Again usage of proposition \ref{prop:ind} to $Ind_{X}\Phi$ instead of $\Phi$, give us the following  non-degenerate representation of $W$, $$Ind_{\widetilde{X}}(Ind_{X}\Phi):W\rightarrow B(\widetilde{X}\otimes_{A}X\otimes_{B}H,W\otimes_{B}\widetilde{X}\otimes_{A}X\otimes_{B}H).$$ Now we want to show that $\Phi$ is unitary equivalent to $Ind_{\widetilde{X}}(Ind_{X}\Phi)$. By the proof of Theorem 3.29 \cite{RAE}; $U_{1}:\widetilde{X}\otimes_{A}X\otimes_{B}H\rightarrow H$ defined by $b(x)\otimes y\otimes h\mapsto \varphi(\langle x,y \rangle_{B})h$ is a unitary operator.\\ We define $$U_{2}:W\otimes_{B}\widetilde{X}\otimes_{A}X\otimes_{B}H\rightarrow K$$ given by $$w\otimes b(x)\otimes y\otimes h\mapsto\Phi(w)\varphi(\langle x,y \rangle_{B})h.$$ $U_{2}$ is a unitary operator, and we have, 
\begin{align*}
U_{2}Ind_{\widetilde{X}}(Ind_{X}\Phi(w))(b(x)\otimes y\otimes h)&=U_{2}(w\otimes \varphi(\langle x,y\rangle_{B})h)\\&=\Phi(w)\varphi(\langle x,y \rangle_{B})h \\&=  \Phi(w)U_{1}(b(x)\otimes y\otimes h). 
\end{align*}
So $$U_{2}Ind_{\widetilde{X}}(Ind_{X}\Phi(w))=\Phi(w)U_{1}.$$ Hence $\Phi$ and $Ind_{\widetilde{X}}(Ind_{X}\Phi)$ are unitary equivalent.\\
 For the equivalence of $\Psi$ and $Ind_{X}(Ind_{\widetilde{X}}\Psi)$, apply the first part to $_{B}\widetilde{X}_{A}$ instead of $_{A}X_{B}$.
\end{proof}

The Financial Support of the Research Council of Damghan University 
 with the Grant Number 92093 is Acknowledged.

% ----------------------------------------------------------------
\bibliographystyle{amsplain}

\end{document}